\documentclass[11pt]{article}
\usepackage[margin=1.0in]{geometry}
\usepackage{amsthm}
\usepackage{amssymb,amsfonts,amsmath,latexsym}
\usepackage{graphicx}
\usepackage{xcolor}
\usepackage[colorlinks = true,
            linkcolor = blue,
            urlcolor  = blue,
            citecolor = blue,
            anchorcolor = blue]{hyperref}

\newcounter{count}
\theoremstyle{plain}

\newtheorem{lemma}[count]{Lemma}
\newtheorem{theorem}[count]{Theorem}
\newtheorem{corollary}[count]{Corollary}


\usepackage[symbol]{footmisc}

\newcommand{\ZZ}{\mathbb{Z}}

\usepackage[
backend=biber,
citestyle=numeric,
bibstyle=numeric,
sorting=nty,
autocite=superscript
]{biblatex}

\addbibresource{references.bib}

\usepackage{mathrsfs}

\title{An exact test for significance of clusters in binary data}
\author{James Mathews\thanks{Memorial Sloan-Kettering Cancer Center}, Cameron Crowe, Rami Vanguri \textsuperscript{*},\\ Margaret Callahan\textsuperscript{*}, Travis Hollmann\textsuperscript{*}, Saad Nadeem\textsuperscript{*}}
\begin{document}

\maketitle

\begin{abstract} Unsupervised clustering of feature matrix data is an indispensible technique for exploratory data analysis and quality control of experimental data. However, clusters are difficult to assess for statistical significance in an objective way. We prove a formula for the distribution of the size of the set of samples, out of a population of fixed size, which display a given signature, conditional on the marginals (frequencies) of each individual feature comprising the signature. The resulting ``exact test for coincidence'' is widely applicable to objective assessment of clusters in any binary data. We also present a software package implementing the test, a suite of computational verifications of the main theorems, and a supplemental tool for cluster discovery using Formal Concept Analysis.
\end{abstract}

\tableofcontents

\newpage

\section{Introduction}
A typical visualization of a binary data matrix is a hierarchically-clustered heatmap, with dendrograms in which the higher-level clusters are recursively comprised of smaller clusters, the hierarchy being computed with an agglomeration strategy involving a distance function defined pairwise between samples (or features) to be clustered. In favorable cases a cluster may appear at some level of the hierachy which is especially characteristic of an important underlying state or measure, i.e. an outcome. For example, likelihood of favorable response to some medical treatment.

But it is often difficult to decide whether a cluster found this way, or any other way, could just as easily have occurred by random chance. This is obviously a primary concern in the unsupervised context, where outcomes which might guide cluster assessment are not present. It is also a concern in the supervised context, due to the possibility of overfitting or multiple-hypothesis false discovery.

As an example, in recent work of the authors\autocite{psgpaper}, a subtype of several types of cancers (including lung and uterus cancers) was identified which exhibited a molecular signature defined by about 10 genes, the PSGs. Network analysis methods implicated the gene subset, but initially confidence concerning its actual significance was low. Pearson correlation analysis was inconclusive due to the presence of outliers. The rarity of the subtype displaying the full signature added to this uncertainty. Ultimately Kaplan-Meier analysis did show that the PSG+ phenotype confers a poor prognosis, confirming the biological significance of this subtype, but we still lacked an objective basis for any claim of statistical significance of the signature/subtype itself. The exact test we introduce in this article turns out to provide such a basis, as described in Figure \ref{psgfigure}.

\section{Theory}
\subsection{Setup}
Let $\overline{M}$ be a binary matrix of shape $(N, K)$. We call the $K$ columns \emph{features} and the $N$ rows \emph{samples}. Given a $k$-element subset $F$ of the feature set, let $M$ denote the restriction of $\overline{M}$ to the columns $F$, and let $v:=(v_1, \ldots, v_k)$ denote the corresponding column sums. Let $S$ denote the set of samples which have all of the features $F$. That is,
\begin{align*}
S = \{ \thinspace s \quad \vert \quad M(s,f)=1 \quad \forall f\in F \}
\end{align*}
A pair $(F, S)$ obtained as above may be called a \emph{maximal bicluster} or a \emph{formal concept}. We shall use the term \emph{signature}, emphasizing the feature set $F$, and call $S$ the set of samples \emph{displaying signature} $F$. In appendix \ref{fca_methods} we explain how to identify, in practice, many examples of $(F, S)$ for which $S$ is non-empty and relatively large. Of course, any other signature discovery method may be used instead.

We propose to assess the significance of a given signature finding in terms of the size $|S|$, under the intuition that simultaneous display of multiple features by a large set of samples indicates a non-trivial relation between the features. We call this size the \emph{incidence} or \emph{intersection} statistic, and denote it $I$.

\subsection{Binary matrix configurations}
We are concerned with binary matrices $M$. If $M$ has $k$ columns, it will be convenient to do some calculations in the ring of formal power series $T:=\ZZ[[t_1, \ldots, t_k]]$. This is because of the correspondence between:
\begin{enumerate}
\itemsep0em
    \item{Multiplicity-free monomials in $T$, i.e. elements of the form $t_J:=\prod\limits_{j\in J}t_j$ for some $J\subset \{1,\ldots,k\}$}
    \item{Subsets $J\subset \{1,\ldots, k\}\thinspace$ (i.e. $J\in \mathscr{P}_{k}$)}
    \item{Possible rows $r=(r^{1},\ldots,r^{k})$ of $M$}
\end{enumerate}
The correspondence is
\begin{align*}
t_J \quad \longleftrightarrow \quad J \quad \longleftrightarrow \quad r=(r^{1},\ldots r^{k}),\thinspace r^j = \begin{cases} 1 \mbox{ if } j\in J \\ 0 \mbox{ if } j\notin J\end{cases}
\end{align*}
Denote by $\mathscr{F}(k)$ the set defined by any of these 3 equivalent descriptions. (Here $\mathscr{F}$ stands for "features".)

Symmetrically, if $M$ has $n$ rows, we consider the ring $W:=\ZZ[[s_1,\ldots,s_n]]$, and the 3 sets in correspondence:
\begin{enumerate}
\itemsep0em
    \item{Multiplicity-free monomials in $W$, i.e. elements of the form $s_U:=\prod\limits_{u\in U}s_u$ for some $U\subset \{1,\ldots,n\}$}
    \item{Subsets $U\subset \{1,\ldots, n\}\thinspace$ (i.e. $U\in \mathscr{P}_{n}$)}
    \item{Possible columns $c=(c^{1},\ldots,c^{n})$ of $M$}
\end{enumerate}

Denote this set by $\mathscr{S}(n)$. (Here $\mathscr{S}$ stands for "samples").

In these terms, the set of all $M$ is naturally identified with $(\mathscr{F}(k))^{n}$ and with $(\mathscr{S}(n))^{k}$ by regarding $M$ as an $n$-tuple of rows or, respectively, as a $k$-tuple of columns.

We will also call the matrices $M$ \emph{configurations}, writing 
\[(\mathscr{F}(k))^{n}\cong (\mathscr{S}(n))^{k}=: \mathscr{C}\]
\[(J_1,\ldots,J_n)\quad \longleftrightarrow\quad (U_1,\ldots,U_k) \quad \longleftrightarrow  M\]
In counting configurations satisfying certain conditions, we will appeal to the notation introduced above for corresponding elements in lieu of explicit notation for the bijection functions.

\subsection{Incidence statistic, its PMF, and CDF\label{formuladist}}
Define integers $a(n,v)$, for integers $n\geq 0$ and $v=(v_1,\ldots, v_k)$ with $v_j\geq 0\thinspace \forall j$, by the generating function:
\begin{align*}
(f(t)-t_1\cdots t_k)^{n}& =: \sum_{v} a(n,v) t_1^{v_1}\cdots t_k^{v_k} \\
f(t):&= (1+t_1)\cdots(1+t_k)
\end{align*}
The following counting theorem is the underlying fact needed to prove a formula for the probability mass function (PMF) of the incidence statistic.
\begin{theorem}\label{intersectioncounting}\hfill
\begin{enumerate}
\itemsep0em
    \item{\label{interpretation_intersection}$a(n,v)$ is the number of configurations in which the mutual intersection of the $U_j$ is empty, that is $\cap_{j=1}^{j=k}U_j=\emptyset$, and such that $|U_j|=v_j$ for each $j$.}
    \item{\label{formula_intersection}$a(n,v)=\sum\limits_{m=0}^{m=n}(-1)^{n+m}\binom{n}{m}\prod\limits_{j=1}^{j=m}\binom{m}{n-v_j} $}
\end{enumerate}
\end{theorem}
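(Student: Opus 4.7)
The plan is to read off the generating function directly for part (\ref{interpretation_intersection}), and then to re-expand it via the binomial theorem for part (\ref{formula_intersection}).

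For part (\ref{interpretation_intersection}), I would first note that
\[
f(t) \;=\; \prod_{j=1}^k (1+t_j) \;=\; \sum_{J \subset \{1,\ldots,k\}} t_J
\]
sums over \emph{all} possible rows, so that $f(t) - t_1\cdots t_k = \sum_{J \subsetneq \{1,\ldots,k\}} t_J$ is the generating function for rows $J$ that are not the all-ones row. Next I would expand $(f(t) - t_1\cdots t_k)^n$ as a product of $n$ such sums, one factor per sample, and argue by distributivity that each term in the expansion corresponds to an $n$-tuple $(J_1,\ldots,J_n)$ of proper subsets of $\{1,\ldots,k\}$, i.e.\ to a configuration $M \in \mathscr{C}$ whose rows are each not all-ones. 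The monomial contributed by such a term is $t_{J_1}\cdots t_{J_n} = t_1^{|U_1|}\cdots t_k^{|U_k|}$, since by definition $|U_j|$ counts the indices $i$ with $j \in J_i$. Finally, observe that row $i$ equals the all-ones row iff $i \in \cap_j U_j$, so forbidding all-ones rows is exactly the condition $\cap_j U_j = \emptyset$. The coefficient of $t_1^{v_1}\cdots t_k^{v_k}$ is therefore the count asserted.

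For part (\ref{formula_intersection}), I would apply the binomial theorem to $(f(t) - t_1\cdots t_k)^n$ viewed with $f(t)$ and $t_1\cdots t_k$ as the two terms:
\[
(f(t) - t_1\cdots t_k)^n \;=\; \sum_{m=0}^n (-1)^m \binom{n}{m}\, (t_1\cdots t_k)^m \prod_{j=1}^k (1+t_j)^{n-m}.
\]
The coefficient of $t_1^{v_1}\cdots t_k^{v_k}$ in the $m$th summand separates across the $k$ factors, giving $\prod_{j=1}^k \binom{n-m}{v_j-m}$, so
\[
a(n,v) \;=\; \sum_{m=0}^n (-1)^m \binom{n}{m} \prod_{j=1}^k \binom{n-m}{v_j-m}.
\]
A change of summation index $m \mapsto n-m$ together with the binomial symmetry $\binom{m}{v_j-n+m} = \binom{m}{n-v_j}$ then rewrites this in the stated form (I read the $m$ appearing as the upper bound of the stated product as a typo for $k$).

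\textbf{Main obstacle.} There is essentially none; this is a short generating-function exercise. The one point that deserves care is the equivalence ``no row is all-ones'' $\Leftrightarrow$ $\cap_j U_j = \emptyset$, which follows immediately from the row/column duality but should be stated explicitly. The index manipulations that convert the natural formula into the exact form given in (\ref{formula_intersection}) are also routine but easy to bungle; I would write them out in full.
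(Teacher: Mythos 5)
Your proposal is correct and follows essentially the same route as the paper: part (1) by interpreting $f(t)-t_1\cdots t_k$ as the generating sum over non-all-ones rows and expanding the $n$th power over configurations, and part (2) by the binomial theorem applied to the two terms $f(t)$ and $t_1\cdots t_k$. The only cosmetic difference is that you index the binomial expansion by the power of $t_1\cdots t_k$ rather than of $f(t)$, so you need the extra reindexing $m\mapsto n-m$ at the end; you are also right that the upper limit $j=m$ in the stated product is a typo for $j=k$.
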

\begin{proof} (\ref{interpretation_intersection}) By expansion, $f(t)$ consists of the sum of all the monomials in $\mathscr{F}(k)$. So $f(t)-t_1\cdots t_k$ is the sum of all the monomials except $t_1\cdots t_k$. Before collecting terms with the same monomial part, the terms of $(f(t)-t_1\cdots t_k)^n$ are labelled by ordered $n$-tuples of elements of $\mathscr{F}(k)\backslash \{t_1\cdots t_k\}$. That is, by certain elements of $\mathscr{C}$. Thus the notation we have introduced for elements of $\mathscr{C}$ may be brought to bear. In particular, the monomial part of a given term is
\[ t_1^{|U_1|}\cdots t_k^{|U_k|} \]
It follows that the coefficient of $t_1^{v_1}\cdots t_k^{v_k}$ is the number of configurations, in which no $J_i$ equals the whole set $\{1,\ldots,k\}$ (due to the missing element $t_1\ldots t_k$), such that $|U_j|=v_j$ for all $j$. The condition that no $J_i$ be equal to the whole set is equivalent to the mutual intersection of $U_j$ being empty.

\noindent(\ref{formula_intersection}) We apply the binomial theorem $1+k$ times:
\begin{align*}
(f(t)-t_1\cdots t_k)^{n} &= \sum\limits_{m=0}^{m=n} (-1)^{n-m} \binom{n}{m} (f(t))^{m} (t_1^{n-m}\cdots t_k^{n-m}) \\
 &=\sum\limits_{m=0}^{m=n} (-1)^{n+m} \binom{n}{m} (1+t_1)^{m}\cdots(1+t_k)^{m} (t_1^{n-m}\cdots t_k^{n-m})\\
 &=\sum\limits_{m=0}^{m=n} (-1)^{n+m} \binom{n}{m} \left(\sum\limits_{u=0}^{u=m}\binom{m}{u}t_1^{u}\right)\cdots\left(\sum\limits_{u=0}^{u=m}\binom{m}{u}t_k^{u}\right) (t_1^{n-m}\cdots t_k^{n-m})\\
 &=\sum\limits_{m=0}^{m=n} (-1)^{n+m} \binom{n}{m} \left(\sum\limits_{v}\prod\limits_{j=1}^{j=k}\binom{m}{v_j}t_1^{v_1}\cdots t_k^{v_k}\right) (t_1^{n-m}\cdots t_k^{n-m})\\
 &=\sum\limits_{v}\sum\limits_{m=0}^{m=n} (-1)^{n+m} \binom{n}{m} \prod\limits_{j=1}^{j=k}\binom{m}{v_j}t_1^{n-m+v_1}\cdots t_k^{n-m+v_k}\\
 &=\sum\limits_{v}\sum\limits_{m=0}^{m=n} (-1)^{n+m} \binom{n}{m} \prod\limits_{j=1}^{j=k}\binom{m}{v_j-(n-m)}t_1^{v_1}\cdots t_k^{v_k}\\
 &=\sum\limits_{v}\sum\limits_{m=0}^{m=n} (-1)^{n+m} \binom{n}{m} \prod\limits_{j=1}^{j=k}\binom{m}{n-v_j}t_1^{v_1}\cdots t_k^{v_k}
\end{align*}
\end{proof}
The proof above is clarified somewhat by the observation that $f(t)$ can be expressed as a specialization of the power series in $W\otimes_{\mathbb{Z}}T$,
\[g(s,t):=\prod\limits_{u,j}(1+s_u t_j)\quad,\]
namely $f(t)=g(\mathbf{1},t)$ where $\mathbf{1} = (1, \ldots, 1)$.
\begin{theorem} Fix integers $i\geq 0$, $v=(v_1,\ldots,v_k)$, $v_j\geq 0$, and $n>0$. Consider the $n\times k$ configurations $M$ in which:
\begin{enumerate}
\itemsep0em
\item{$|U_j|=v_j$ for each $j$.}
\item{The cardinality of the intersection of the $U_j$ is exactly $i$, that is $|\cap_{j=1}^{j=k}U_j|=i$.}
\end{enumerate}
The number of such configurations is given by the formula:
\[ \binom{n}{i}\sum\limits_{m=0}^{m=n-i}(-1)^{n-i+m}\binom{n-i}{m}\prod\limits_{j=1}^{j=m}\binom{m}{n-v_j}  \]
\end{theorem}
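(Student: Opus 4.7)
The plan is to reduce this counting problem to the one solved in Theorem \ref{intersectioncounting} by fixing and then removing the intersection. The key observation is that if $|\cap_j U_j| = i$, one can first choose which $i$ rows form the intersection, and then the residual configuration on the remaining $n-i$ rows must have its $U_j$-columns of size $v_j - i$ with empty mutual intersection.

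First I would partition the set of configurations being counted according to the specific intersection $I := \cap_{j=1}^{k} U_j \subset \{1,\ldots,n\}$. Since there are $\binom{n}{i}$ subsets $I$ of size $i$, and the count clearly depends only on $|I|$ (by the symmetry of relabeling rows), it suffices to count configurations with $\cap_j U_j = I$ for one fixed $I$ and multiply by $\binom{n}{i}$.

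Next I would argue that the configurations with $\cap_j U_j = I$ (and $|U_j|=v_j$) biject with $(n-i)\times k$ configurations $M'$ on rows $\{1,\ldots,n\}\setminus I$, with $U'_j := U_j \setminus I$, satisfying $|U'_j| = v_j - i$ and $\cap_j U'_j = \emptyset$. The bijection is just restriction: each $U_j$ must contain all of $I$ (so that $I \subseteq \cap_j U_j$), and the condition $\cap_j U_j = I$ translates, on the complement rows, to $\cap_j U'_j = \emptyset$.

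Applying Theorem \ref{intersectioncounting}(\ref{formula_intersection}) to this reduced problem with $n$ replaced by $n-i$ and each $v_j$ replaced by $v_j - i$, the count of such $M'$ equals
\[ a(n-i, \, v-i\mathbf{1}) = \sum_{m=0}^{n-i} (-1)^{n-i+m} \binom{n-i}{m} \prod_{j=1}^{k} \binom{m}{(n-i)-(v_j-i)}, \]
and $(n-i)-(v_j-i) = n - v_j$, so the inner binomial is $\binom{m}{n-v_j}$, matching the formula in the statement (the product index upper bound in the statement is a minor typo for $k$, which is forced by the origin of these binomials as coefficients of $t_j^{v_j}$ for $j=1,\ldots,k$). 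Multiplying by $\binom{n}{i}$ gives the claimed count.

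There is no substantive obstacle: the only thing to verify is that the restriction map is a bijection and that the residual problem has exactly the parameters needed to invoke Theorem \ref{intersectioncounting}. Once those two facts are in hand, the formula is immediate.
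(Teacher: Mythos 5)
Your proof is correct and follows essentially the same route as the paper: partition by the identity of the $i$-element intersection set, observe that the residual configuration on the remaining $n-i$ rows has column sizes $v_j-i$ with empty mutual intersection, and invoke Theorem \ref{intersectioncounting}.\ref{formula_intersection} with parameters $(n-i,\, v-i\mathbf{1})$. Your side remark that the product's upper index should read $k$ rather than $m$ is also consistent with how the formula arises in the paper's own derivation.
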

\begin{proof} The indicated set of configurations is partitioned equally into $\binom{n}{i}$ sets, according to which $i$-element sample subset is the mutual intersection, denoted $X$. By construction the reduced configuration matrix, not involving the elements of $X$, must consist of $k$ features with sample sets of sizes $(v_1-i, \ldots, v_k-i)$ and with no intersection. Thus the size of each part of the partition is $a(n-i, (v_1-i,\ldots,v_k-i))$. The number of configurations is therefore
\[\binom{n}{i}a(n-i, (v_1-i,\ldots, v_k-i))\]
The result follows from the formula for $a$ given in Theorem \ref{intersectioncounting}.\ref{formula_intersection}.
\end{proof}

The null assumption we make for our test is the one that is made implicitly in a standard permutation test, namely the uniform distribution on the subset of $\mathscr{C}$ defined by $|U_{j}|=v_j$, given $v=(v_1,\ldots,v_k)$. Note that this entails that we do \emph{not} assume $M$ is comprised of $n$ independent and identically distributed (iid) samples. Also, despite the fact that $M$ appears to be $n$ samples from a set of binary discrete variables, it is definitely not $n$ samples of Bernoulli variables; for example, the variance of the number of positives is 0 for each feature, rather than $np(1-p)$ for some positivity rate $p$.

Under this assumption the incidence statistic $I$ is an integer-valued random variable. The following corollary provides a formula for its PMF.

\begin{corollary}\label{stattest}Consider $n$ samples observed with $k$ binary features of respective frequencies $v_1,\ldots v_k$. The probability of observing exactly $i$ samples positive for all $k$ features is:
\[ p(I=i) = \frac{\binom{n}{i}\sum\limits_{m=0}^{m=n-i}(-1)^{n-i+m}\binom{n-i}{m}\prod\limits_{j=1}^{j=m}\binom{m}{n-v_j}}{\prod\limits_{j=1}^{j=k}\binom{n}{v_j}} \] 
\end{corollary}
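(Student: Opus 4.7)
The plan is to express $P(I=i)$ as a ratio of two counts, both of which are essentially already on the table: the numerator is what the preceding theorem enumerates, and the denominator is a simple independent product.

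First I would observe that under the correspondence $M \leftrightarrow (U_1,\ldots,U_k)$, the set $S$ of samples displaying every feature in $F$ coincides with $\bigcap_{j=1}^{k} U_j$, so the incidence statistic is literally $I = |\bigcap_{j=1}^{k} U_j|$. Hence $\{I = i\}$ is precisely the event counted in the preceding theorem with the given marginals $v = (v_1,\ldots,v_k)$, and its cardinality is the numerator in the claimed formula.

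Next I would identify the support of the null distribution. By assumption it is uniform on $\mathscr{C}_v := \{M \in \mathscr{C} : |U_j| = v_j \text{ for all } j\}$. Viewed as a $k$-tuple of columns, a configuration in $\mathscr{C}_v$ amounts to an unconstrained, independent choice of a $v_j$-subset $U_j \subset \{1,\ldots,n\}$ for each $j$, so $|\mathscr{C}_v| = \prod_{j=1}^{k} \binom{n}{v_j}$. Dividing the numerator count by this normalization yields the stated formula.

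There is no real obstacle here: the substantive combinatorics was already carried out in Theorem \ref{intersectioncounting} and its successor. The only point worth flagging, because the authors have just emphasized that $M$ is \emph{not} a sample of iid Bernoulli rows, is that the null distribution nonetheless decomposes as an independent product \emph{across columns}; this column-wise independence is what makes the denominator factor so cleanly, even though nothing of the sort holds across rows.
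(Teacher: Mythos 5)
Your proposal is correct and matches the paper's (implicit) argument exactly: the paper leaves the corollary unproved precisely because it is the count from the preceding theorem divided by the size $\prod_{j=1}^{k}\binom{n}{v_j}$ of the support of the uniform null distribution. Your added remark about column-wise independence of the normalization is a fair clarification but not a departure from the paper's route.
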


By summing over several values of $i$ in Corollary \ref{stattest}, one can compute a value of the cumulative distribution function (CDF) of $I$. This is (one minus) the $p$-value for the proposed ``exact test for coincidence".

The next theorem provides an alternative, more closed-form calculation of the CDF, with significantly decreased computational complexity compared with direct summation of PMF values, namely $O(n)$ rather than $O(n^2)$.

The proof of this theorem depends on two basic lemmas.
\begin{lemma}
\[ \binom{a}{b} \binom{b}{c} = \binom{a-c}{a-b} \binom{a}{c} \]
\end{lemma}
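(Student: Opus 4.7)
The plan is to give a short direct verification, since this is the standard ``subset of a subset'' identity. The cleanest route is to expand both sides in terms of factorials and observe that they coincide. Specifically, I would write
\[ \binom{a}{b}\binom{b}{c} = \frac{a!}{(a-b)!\, b!} \cdot \frac{b!}{(b-c)!\, c!} = \frac{a!}{(a-b)!\,(b-c)!\,c!}, \]
and on the other side
\[ \binom{a-c}{a-b}\binom{a}{c} = \frac{(a-c)!}{(a-b)!\,(b-c)!} \cdot \frac{a!}{c!\,(a-c)!} = \frac{a!}{(a-b)!\,(b-c)!\,c!}. \]
The two expressions are visibly identical, which completes the proof. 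One should implicitly assume $0\le c\le b\le a$ so that all factorials are well-defined; outside this range both sides are zero under the standard convention $\binom{n}{k}=0$ for $k<0$ or $k>n$, so the identity still holds.

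Alternatively, and perhaps more illuminatingly, both sides count the same set of nested pairs $(B,C)$ with $C\subseteq B\subseteq A$, $|A|=a$, $|B|=b$, $|C|=c$. The left-hand side chooses $B$ inside $A$ and then $C$ inside $B$. The right-hand side chooses $C$ inside $A$ first, and then chooses the complementary piece $B\setminus C$ as an $(a-b)$-element subset of the $(a-c)$-element set $A\setminus C$ (equivalently, a $(b-c)$-element subset).

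There is no real obstacle here; the only thing to be careful about is the index convention on the binomial coefficients so that the algebraic expressions line up. Given that this lemma is invoked only as an intermediate manipulation in the subsequent CDF derivation, the factorial calculation above suffices and is what I would include in the final write-up.
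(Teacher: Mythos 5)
Your factorial expansion is exactly the paper's own proof (the paper cancels $b!$ and regroups with $(a-c)!$ in the same way), so the proposal is correct and takes essentially the same approach; the combinatorial ``subset of a subset'' reading and the remark about out-of-range indices are harmless additions.
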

\begin{proof}
\begin{align*}
\frac{a!}{(a-b)!b!}\cdot \frac{b!}{(b-c)!c!} = \frac{1}{(a-b)!(b-c)!}\cdot \frac{a!}{c!} = \frac{(a-c)!}{(a-b)!(b-c)!}\cdot \frac{a!}{(a-c)!c!}
\end{align*}
\end{proof}

\begin{lemma}
\[ \sum\limits_{h=0}^{h=l}(-1)^{h}\binom{g}{h} = (-1)^{l} \binom{g-1}{l} \]
\end{lemma}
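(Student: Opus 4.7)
The plan is to prove this by induction on $l$, using Pascal's rule $\binom{g}{h}=\binom{g-1}{h}+\binom{g-1}{h-1}$ as the sole combinatorial ingredient.

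For the base case $l=0$, both sides evaluate to $1$: the left side is $\binom{g}{0}=1$ and the right side is $(-1)^{0}\binom{g-1}{0}=1$.

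For the inductive step, assume the identity holds for some $l\geq 0$. Then
\[
\sum_{h=0}^{l+1}(-1)^{h}\binom{g}{h} \;=\; (-1)^{l}\binom{g-1}{l} + (-1)^{l+1}\binom{g}{l+1},
\]
and it suffices to show this equals $(-1)^{l+1}\binom{g-1}{l+1}$. After dividing through by $(-1)^{l+1}$, the required equality reduces to $\binom{g}{l+1}-\binom{g-1}{l}=\binom{g-1}{l+1}$, which is exactly Pascal's rule.

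The argument is essentially routine; the only mild subtlety is keeping the alternating signs straight and confirming that the identity $\binom{g-1}{-1}=0$ handles the lower endpoint cleanly (alternatively one can reindex via the substitution $h\mapsto h-1$ in a telescoping version of the same argument). No obstacle is anticipated, and the lemma will feed directly into the CDF derivation that follows.
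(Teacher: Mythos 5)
Your proof is correct, but it runs the induction along a different axis than the paper does. You induct on $l$ (the upper summation limit), with the inductive step reducing, after clearing the sign $(-1)^{l+1}$, exactly to Pascal's rule $\binom{g}{l+1}=\binom{g-1}{l}+\binom{g-1}{l+1}$. The paper instead inducts on $g$: it expands $\binom{g+1}{h}=\binom{g}{h}+\binom{g}{h-1}$ inside the sum, reindexes the second sum via $h\mapsto h-1$, and observes that the two resulting partial sums telescope to the single term $(-1)^{l}\binom{g}{l}$, which is the desired right-hand side for $g+1$. (In fact the paper's ``inductive step'' never invokes its inductive hypothesis --- the telescoping computation establishes the identity for $g+1$ outright --- so its induction framing is somewhat vestigial, whereas your induction on $l$ genuinely uses the hypothesis.) Both arguments rest on the same single ingredient, Pascal's rule, and both are routine; yours is arguably the more transparent of the two, while the paper's version has the minor advantage of exhibiting the right-hand side directly as the surviving term of a telescoping sum. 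Your aside about $\binom{g-1}{-1}=0$ is not actually needed for the induction on $l$ as you set it up (the base case $l=0$ gives $1=1$ with no boundary term), but it is the correct bookkeeping if one prefers the pure telescoping variant you mention.
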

\begin{proof}
By induction. Base case $g=1$:
\begin{align*}
(-1)^{0}\binom{1}{0} &= 1 = (-1)^{0}\binom{0}{0} \\
\binom{1}{0} - \binom{1}{1} &= 0 = (-1)^{1}\binom{0}{1}
\end{align*}
Now assume the formula holds (for all $l$) for a fixed $g\geq 0$.
\begin{align*}
\sum\limits_{h=0}^{h=l}(-1)^{h}\binom{g+1}{h} &= \sum\limits_{h=0}^{h=l}(-1)^{h}\left(\binom{g}{h}+\binom{g}{h-1}\right) \\
 &= \sum\limits_{h=0}^{h=l}(-1)^{h}\binom{g}{h} + \sum\limits_{h=0}^{h=l}(-1)^{h}\binom{g}{h-1} \\
 &= \sum\limits_{h=0}^{h=l}(-1)^{h}\binom{g}{h} + \sum\limits_{h=1}^{h=l}(-1)^{h}\binom{g}{h-1}\\
 &= \sum\limits_{h=0}^{h=l}(-1)^{h}\binom{g}{h} - \sum\limits_{h=0}^{h=l-1}(-1)^{h}\binom{g}{h}\\
 &= (-1)^{l}\binom{g}{l}
\end{align*}
\end{proof}

\begin{theorem}\label{cdf}\hfill
\begin{align*}
&\sum\limits_{u=i}^{u=n}p(I=u)=\frac{N}{D}
\end{align*}
where
\begin{align*}
&N:=\sum\limits_{m=\text{\emph{max}}\{n-v_j\}}^{m=n-i}(-1)^{m} \binom{n}{m} \left( (-1)^{\text{\emph{max}}\{n-v_j\}}\binom{n-m-1}{n-\text{\emph{max}}\{n-v_j\}} + (-1)^{n-i}\binom{n-m-1}{i-1} \right) \prod\limits_{j=1}^{j=m} \binom{m}{n-v_j}\\
&D:=\prod\limits_{j=1}^{j=k}\binom{n}{v_j}
\end{align*}
\end{theorem}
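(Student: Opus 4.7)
The plan is to sum the PMF from Corollary~\ref{stattest} over $u$ from $i$ to $n$, interchange the resulting double sum over $u$ and the inner index $m$, and collapse the remaining $u$-sum with the two lemmas above. Write $D := \prod_{j=1}^{k}\binom{n}{v_j}$, $P_m := \prod_{j=1}^{k}\binom{m}{n-v_j}$, and $M := \max_j\{n-v_j\}$, so that the starting point is
\[
\sum_{u=i}^{n} p(I=u) \;=\; \frac{1}{D}\sum_{u=i}^{n}\binom{n}{u}\sum_{m=0}^{n-u}(-1)^{n-u+m}\binom{n-u}{m}\,P_m.
\]

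First I would interchange the order of summation so that $m$ ranges over $0 \le m \le n-i$ and, for each $m$, $u$ ranges over $i \le u \le n-m$. An application of the first lemma above, with $a = n$, $b = n-u$, $c = m$, rewrites $\binom{n}{u}\binom{n-u}{m}$ as $\binom{n}{m}\binom{n-m}{u}$, allowing $\binom{n}{m}$ and $P_m$ to be pulled out of the $u$-sum. Since $P_m = 0$ for $m < M$, the outer sum can be truncated to $M \le m \le n-i$.

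To produce the two-term bracket appearing in the stated $N$, I would decompose the remaining inner sum as
\[
\sum_{u=i}^{n-m}(-1)^{n-u+m}\binom{n-m}{u} \;=\; \sum_{u=0}^{n-m}(-1)^{n-u+m}\binom{n-m}{u} \;-\; \sum_{u=0}^{i-1}(-1)^{n-u+m}\binom{n-m}{u},
\]
factor out $(-1)^{n+m}$ using $(-1)^{-u} = (-1)^u$, and apply the second lemma above to each piece separately: once with $g = n-m$ and $l = n-m$, yielding $(-1)^{n-m}\binom{n-m-1}{n-m}$, and once with $g = n-m$ and $l = i-1$, yielding $(-1)^{i-1}\binom{n-m-1}{i-1}$. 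On the restricted range $M \le m \le n-i$ one has $\binom{n-m-1}{n-m} = \binom{n-m-1}{n-M}$ termwise (they agree at $m = M$, and both vanish for $m > M$), which permits the replacement matching the form in the statement. Collecting all remaining signs into the uniform prefactor $(-1)^m\binom{n}{m}$ then reproduces the bracketed combination $(-1)^M\binom{n-m-1}{n-M} + (-1)^{n-i}\binom{n-m-1}{i-1}$ exactly as written.

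The main obstacle will be the sign and index bookkeeping: the exponents $n-u+m$, $n+m$, $n-m$, and $i-1$ must be tracked through both applications of the second lemma so that they collapse into the uniform $(-1)^m$ factor shown in the statement, and the termwise equality $\binom{n-m-1}{n-m} = \binom{n-m-1}{n-M}$ on the summation range must be justified explicitly. The degenerate boundary cases $i = 0$ or $\min_j v_j = 0$ should also be checked, since there the first bracketed term supplies the nonzero contribution needed to recover the trivially correct CDF value, provided one uses the extended binomial convention $\binom{-1}{0} = 1$.
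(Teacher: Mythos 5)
Your proof is correct and follows essentially the same route as the paper's: interchange the $u$- and $m$-sums, use the first lemma to rewrite $\binom{n}{u}\binom{n-u}{m}$ as $\binom{n}{m}\binom{n-m}{u}$, collapse the alternating $u$-sum via the second lemma as a difference of two partial sums, and truncate the $m$-range using the vanishing of $\prod_j\binom{m}{n-v_j}$. The only deviation is that you keep the inner sum's upper limit at $n-m$ (the natural limit after interchanging) where the paper uses $\min_j\{v_j\}$ (justified up front by $p(I=u)=0$ for $u>\min_j\{v_j\}$), so your first bracket term emerges as $(-1)^{n-m}\binom{n-m-1}{n-m}$ rather than $(-1)^{\min\{v_j\}}\binom{n-m-1}{\min\{v_j\}}$; your observation that these agree termwise on $\max\{n-v_j\}\le m\le n-i$ (both vanish away from the boundary and degenerate cases, and the signs coincide precisely where the terms are nonzero) correctly bridges that cosmetic gap.
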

\begin{proof} First note that $p(I=u)=0$ if $u>\text{min}\{v_j\}$, so the sum stops at $u=\text{min}\{v_j\}$. We apply the formula for $p(I=u)$:
\begin{align*}
&\sum\limits_{u=i}^{u=\text{min}\{v_j\}}\binom{n}{u}a(n-u, (v_1-u,\ldots, v_k-u)) = \sum\limits_{u=i}^{u=\text{min}\{v_j\}}\binom{n}{u}\sum\limits_{m=0}^{m=n-u}(-1)^{n-u+m}\binom{n-u}{m}\prod\limits_{j=1}^{j=m}\binom{m}{n-v_j} \\
&= (-1)^{n} \sum\limits_{m=0}^{m=\infty}(-1)^{m} \prod\limits_{j=1}^{j=m} \binom{m}{n-v_j} \sum\limits_{u=i}^{u=\text{min}\{v_j\}} (-1)^{u} \binom{n}{n-u} \binom{n-u}{m} \\
&= (-1)^{n} \sum\limits_{m=0}^{m=\infty}(-1)^{m} \prod\limits_{j=1}^{j=m} \binom{m}{n-v_j} \sum\limits_{u=i}^{u=\text{min}\{v_j\}} (-1)^{u} \binom{n-m}{u} \binom{n}{m} \\
&= (-1)^{n} \sum\limits_{m=0}^{m=\infty}(-1)^{m} \binom{n}{m} \prod\limits_{j=1}^{j=m} \binom{m}{n-v_j} \sum\limits_{u=i}^{u=\text{min}\{v_j\}} (-1)^{u} \binom{n-m}{u} \\
&= (-1)^{n} \sum\limits_{m=0}^{m=\infty}(-1)^{m} \binom{n}{m} \prod\limits_{j=1}^{j=m} \binom{m}{n-v_j} \left( (-1)^{\text{min}\{v_j\}}\binom{n-m-1}{\text{min}\{v_j\}} - (-1)^{i-1}\binom{n-m-1}{i-1} \right) \\
&= (-1)^{n} \sum\limits_{m=0}^{m=\infty}(-1)^{m} \binom{n}{m} \left( (-1)^{\text{min}\{v_j\}}\binom{n-m-1}{\text{min}\{v_j\}} + (-1)^{i}\binom{n-m-1}{i-1} \right) \prod\limits_{j=1}^{j=m} \binom{m}{n-v_j} \\
&= \sum\limits_{m=0}^{m=\infty}(-1)^{m} \binom{n}{m} \left( (-1)^{n-\text{min}\{v_j\}}\binom{n-m-1}{\text{min}\{v_j\}} + (-1)^{n-i}\binom{n-m-1}{i-1} \right) \prod\limits_{j=1}^{j=m} \binom{m}{n-v_j} \\
&= \sum\limits_{m=0}^{m=\infty}(-1)^{m} \binom{n}{m} \left( (-1)^{\text{max}\{n-v_j\}}\binom{n-m-1}{n-\text{max}\{n-v_j\}} + (-1)^{n-i}\binom{n-m-1}{i-1} \right) \prod\limits_{j=1}^{j=m} \binom{m}{n-v_j} \\
&= \sum\limits_{m=\text{max}\{n-v_j\}}^{m=n-i}(-1)^{m} \binom{n}{m} \left( (-1)^{\text{max}\{n-v_j\}}\binom{n-m-1}{n-\text{max}\{n-v_j\}} + (-1)^{n-i}\binom{n-m-1}{i-1} \right) \prod\limits_{j=1}^{j=m} \binom{m}{n-v_j}
\end{align*}
\end{proof}
Plots of the PMF/CDFs for some values of the parameters are shown in Figure \ref{pdfs}. The behavior of the test in an example case is illustrated in Figure \ref{illustration1case}.

\subsection{CDF generating function and incomplete beta function}
The generating function for the values of CDF$(i)$, that is with $i$ and $n$ fixed and the $v=(v_1,\ldots, v_k)$ variable, is nearly expressible as the regularized incomplete beta function $I_{x}(a,b)$ with certain arguments, establishing a strong analogy to the binomial distribution. The number of configurations with up to $i$ incidence statistic is given by the generating function:
\begin{align*}
\sum\limits_{v}&\sum\limits_{u=0}^{u=i}\binom{n}{u}a(n-u, (v_1-u,\ldots, v_k-u))t^{v} = \sum\limits_{u=0}^{u=i}\binom{n}{u} (f(t)-t_1\cdots t_k)^{n-u}(t_1\cdots t_k)^{u}\\
&= f(t)^{n} \sum\limits_{u=0}^{u=i} \binom{n}{u}\left(1 - \frac{t_1\cdots t_k}{f(t)}\right)^{n-u}\left( \frac{t_1\cdots t_k}{f(t)}\right)^{u}\\
&= f(t)^{n} \thinspace I_{1 - \frac{t_1\cdots t_k}{f(t)}}(n-i, i+1)
\end{align*}
The last equation above is a "formal" application of the expression for the CDF of a binomial distribution with $n$ trials, that is,
\begin{align*}
\sum\limits_{u=0}^{u=i} \binom{n}{u} p^{u}(1-p)^{n-u} = I_{1-p}(n-i, i+1)
\end{align*}
except that instead of the usual real parameter $p\in [0,1]$ of such a distribution, $p$ must be permitted to be equal to the power series $\frac{t_1\cdots t_k}{f(t)}$ which tabulates information across all of the different values of the parameters $v=(v_1,\ldots, v_k)$.

The total number of configurations is given by the generating function $(f(t))^{n}$, so the generating function for CDF$(i)$ is the ratio:
\begin{align*}
&f(t)^{n}I_{1 - \frac{t_1\cdots t_k}{f(t)}}(n-i, i+1)\thinspace // \thinspace f(t)^{n}
\end{align*}
Here the double division symbol $//$ means the coefficient-wise ratio of the multi-dimensional series represented by the respective generating functions. Thus, despite the analogy with the binomial distribution, the generating function for CDF$(i)$ is not literally equal to $I_{1 - \frac{t_1\cdots t_k}{f(t)}}(n-i, i+1)$.

\section{Software implementation}
\subsection{Python package}
A Python package \texttt{\href{https://pypi.org/project/coincidencetest/}{coincidencetest}} is released on PyPI. It contains a self-contained module, with no dependencies beyond the standard library, that calculates the $p$-value for the test.

\subsection{Command-line tool}
A command-line tool is distributed with \texttt{coincidencetest} that bundles together a basic, lightweight signature discovery algorithm as well as test evaluation on an input binary matrix file. This may be run in a non-interactive context on a remote server or as part of a pipeline.

\subsection{Web application}
A simple GUI performs signature discovery and evaluation in real-time after user upload of a binary matrix file. A screenshot is shown in Figure \ref{screenshot_gui}.

\subsection{Testing}
The Python package contains a test suite which verifies the $p$-value formulas (i.e. the PMF and CDF) against brute-force enumerations for several small values of the parameters, furnishing rigorous computational evidence for the main theorems in addition to the proofs.

\section{Related work}
The test turns out to specialize to the Fisher exact test\autocite{fisher1922} in the case of 2 features, $|F|=2$. The incidence statistic and the frequencies of each feature provide the same information as a $2 \times 2$ integer contingency table, and the formula for the probability value agrees with ours in this case.

The Fisher exact test has been generalized to larger, $r \times c$ contingency tables\autocite{highercontingency}. Whether such tables are regarded as pertaining to 2 categorical variables with $r$ and $c$ categories respectively, or as pertaining to pairs of binary variables, one from a list of $r$ variables and one from a list of $c$ variables, contingency table methods are second-order in that they only involve interactions between pairs of variables. Much work on exact inference generally has focused on contingency tables, with multi-dimensional generalizations appearing in the literature up to order 3 (e.g. $I\times J\times K$ tables\autocite{agresti1992survey}).

By contrast our test is inherently higher-order, depending, albeit in a simple way, on the mutual interaction of all $k$ variables. As for other higher-order methods, an investigation of the joint distribution of Bernoulli variables under certain constraints has been published\autocite{kolev2006joint}, and this may yield a test with comparable domain of applicability as our test. However, as indicated in section \ref{formuladist}, the Bernoulli context involves a different null assumption.

In Good\autocite{good1976application} a very similar generating function to our $g(s,t)$ is identified as a tabulation of the number of \emph{contingency tables} (not binary matrix configurations) with fixed column and row sums. The function is $g(-s,t)^{-1}=g(s,-t)^{-1}$ (c.f. page 1166 item 5.6 and page 1182, ``$f(\mathbf{z})$"). This connection may help to explain the appearance of the beta function in the generating function for the CDF of the incidence statistic.

\appendix
\section{Formal Concept Analysis bicluster identification\label{fca_methods}}

Formal Concept Analysis (FCA)\autocite{ganter2005introtext} studies a binary data matrix, called a \emph{formal context}, in terms of a lattice of certain patterns found in the matrix. The patterns are known as \emph{(formal) concepts}. Such a concept consists of a bicluster $(F,S)$, defined as a set of features $F$ and a set of samples $S$ for which the submatrix along $(F,S)$ consists of all 1s, which is maximal in two senses: (1) $S$ cannot be enlarged without reducing $F$, and (2) $F$ cannot be enlarged without reducing $S$.

The containment relations of the sets $F$ (respectively $S$) confer a partial ordering or lattice structure on the set of all concepts, which turns out to be complete. The maximality condition amounts to a closure condition on the sets $F$ (respectively $S$), and the whole apparatus can be formulated as a Galois correspondence between two closure systems on the full feature set and full sample set.

A straightforward recursive algorithm can be used to enumerate \emph{all} concepts in a given context\autocite{ganter2016conceptual}. This algorithm applies to any finite closure system, and it works by computing the closure of the union of any pair of previously-found closed sets.

In practice, however, data sets of intermediate size or larger furnish too many concepts for a complete enumeration to provide a useful direction of attention to important subsamples or signatures. The present work is partly motivated by this problem, as it can be used to filter signatures by significance.

\newpage

\section{Figures}
\begin{figure}[ht!]
  \centering
    \includegraphics[trim=0.0cm 20cm 0cm 0cm, width=1.00\textwidth]{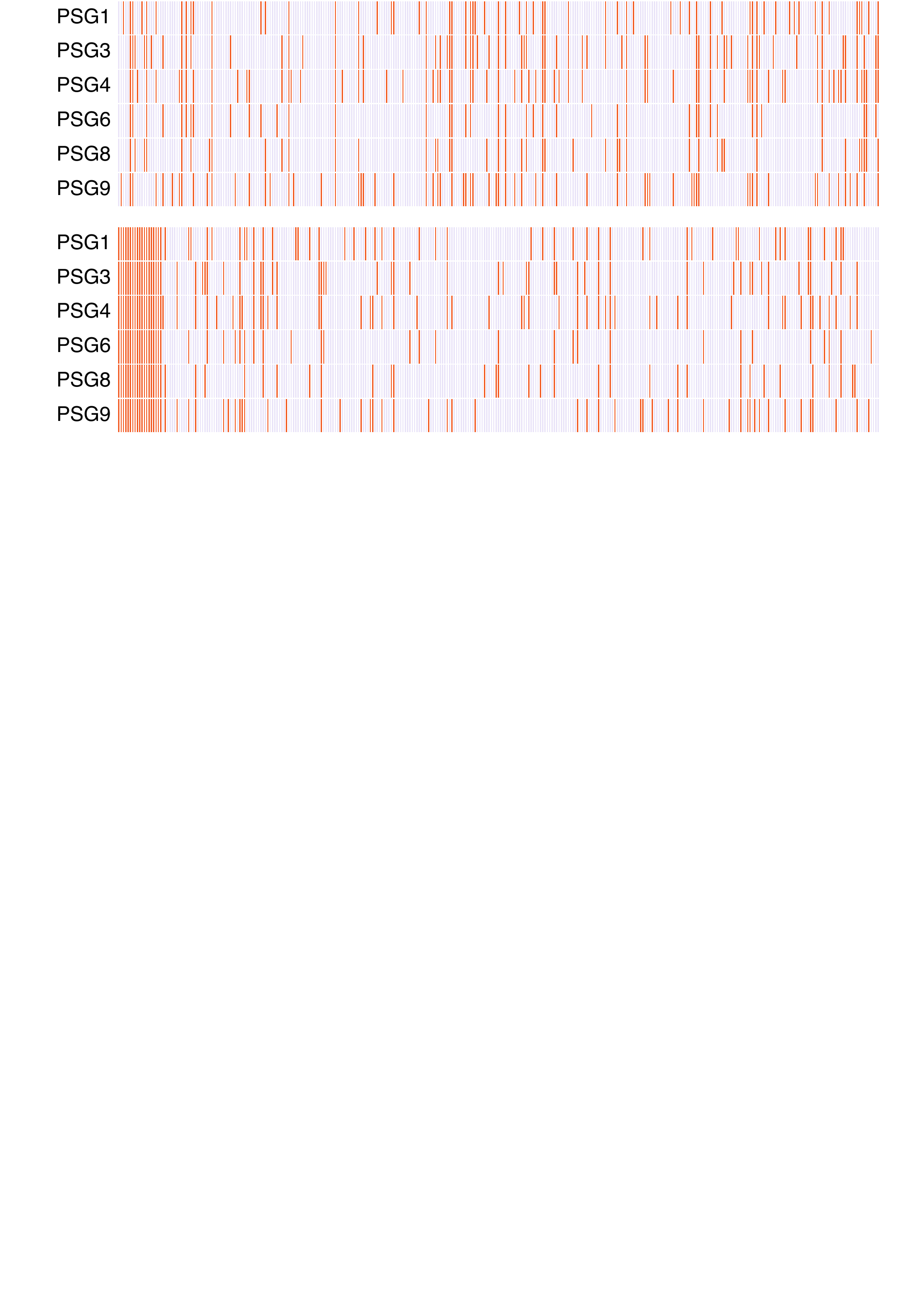}
    \caption{\label{psgfigure}(Above) The dichotomized expression of several PSG genes on 510 lung tumor samples from the TCGA-LUAD project. (Below) The same expression matrix, with the 19 samples that are positive for all features grouped together on the left. The number of positives for each feature are respectively 101, 105, 106, 73, 69, 104. The exact test for coincidence yields $p=5.1\cdot 10^{-56}$, suggesting that the PSG+ phenotype is highly statistically significant. The loci of the PSG genes are very near to each other, so this is not too surprising; it is likely that gene amplification events near this locus were the cause of the observation.}
\end{figure}

\begin{figure}[ht!]
  \centering
    \includegraphics[trim=0.0cm 14.5cm 0cm 0cm, width=1.00\textwidth]{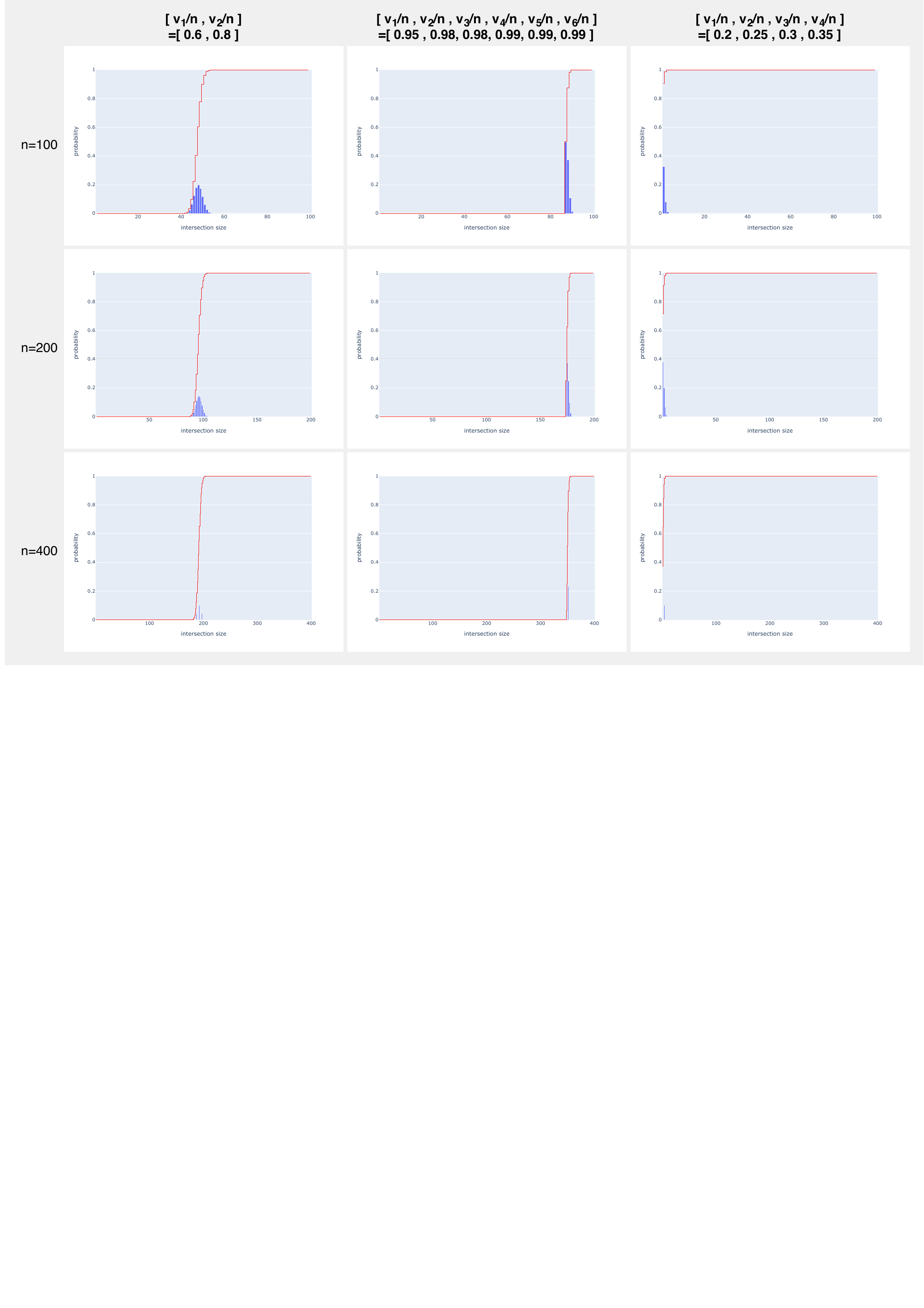}
  \caption{\label{pdfs}(Blue) The probability mass functions for the incidence statistic at several values of the set sizes $v$ and the ambient set size $n$. (Red) The cumulative distribution functions.}
\end{figure}

\begin{figure}[ht!]
  \centering
    \includegraphics[trim=0.0cm 18cm 0cm 0cm, width=1.00\textwidth]{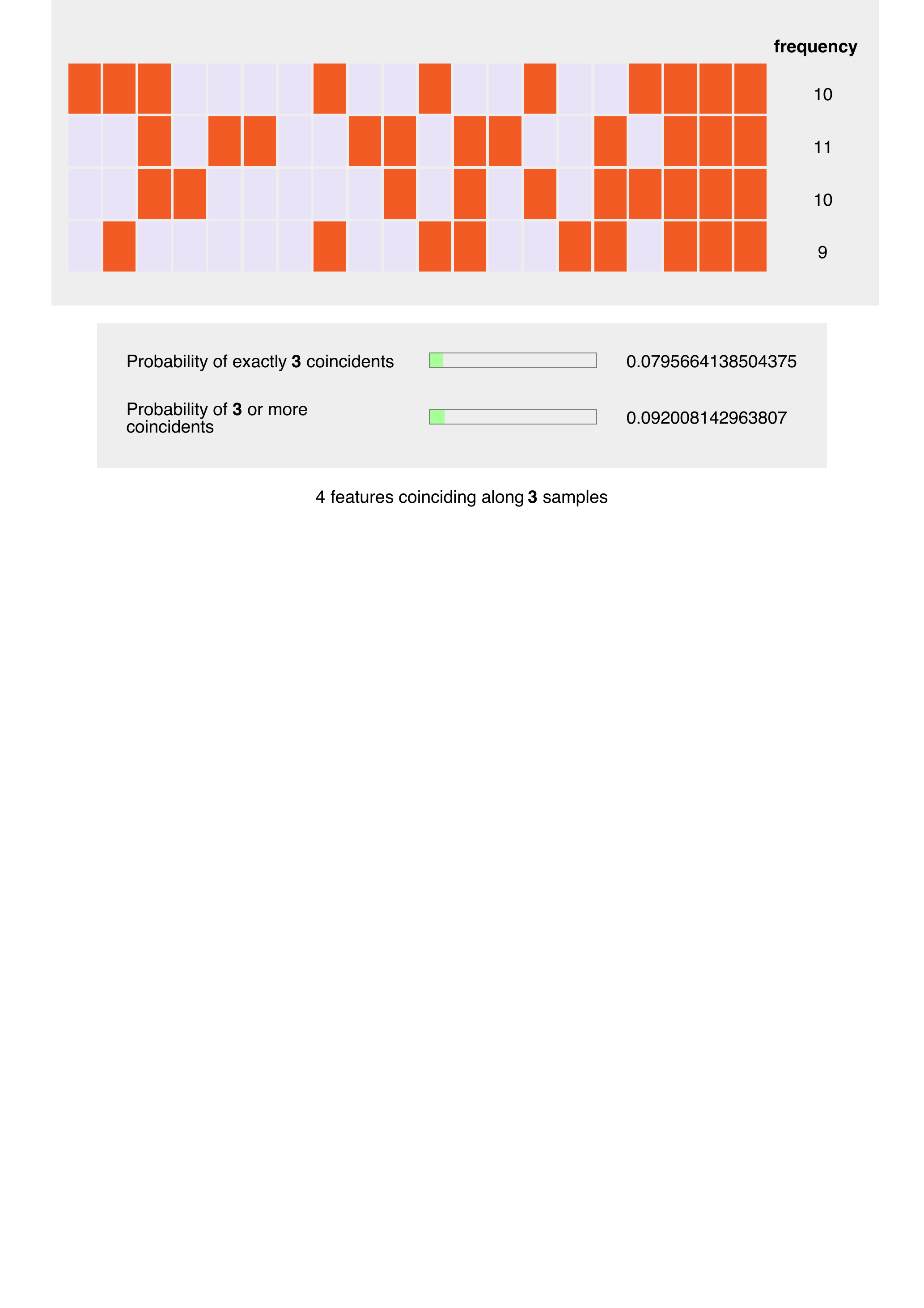}
  \caption{\label{illustration1case}Illustration of an application of the exact test for coincidence.}
\end{figure}

\begin{figure}[ht!]
  \centering
    \includegraphics[width=0.6\textwidth]{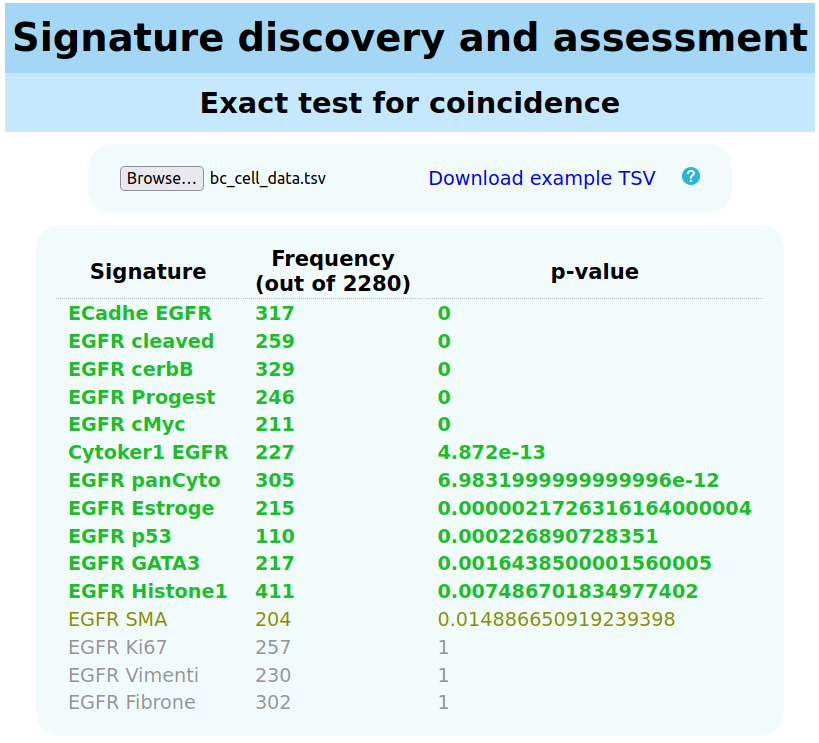}
    \caption{\label{screenshot_gui}A screenshot of the in-browser GUI.}
\end{figure}

\begin{figure}[ht!]
  \centering
    \includegraphics[width=0.95\textwidth]{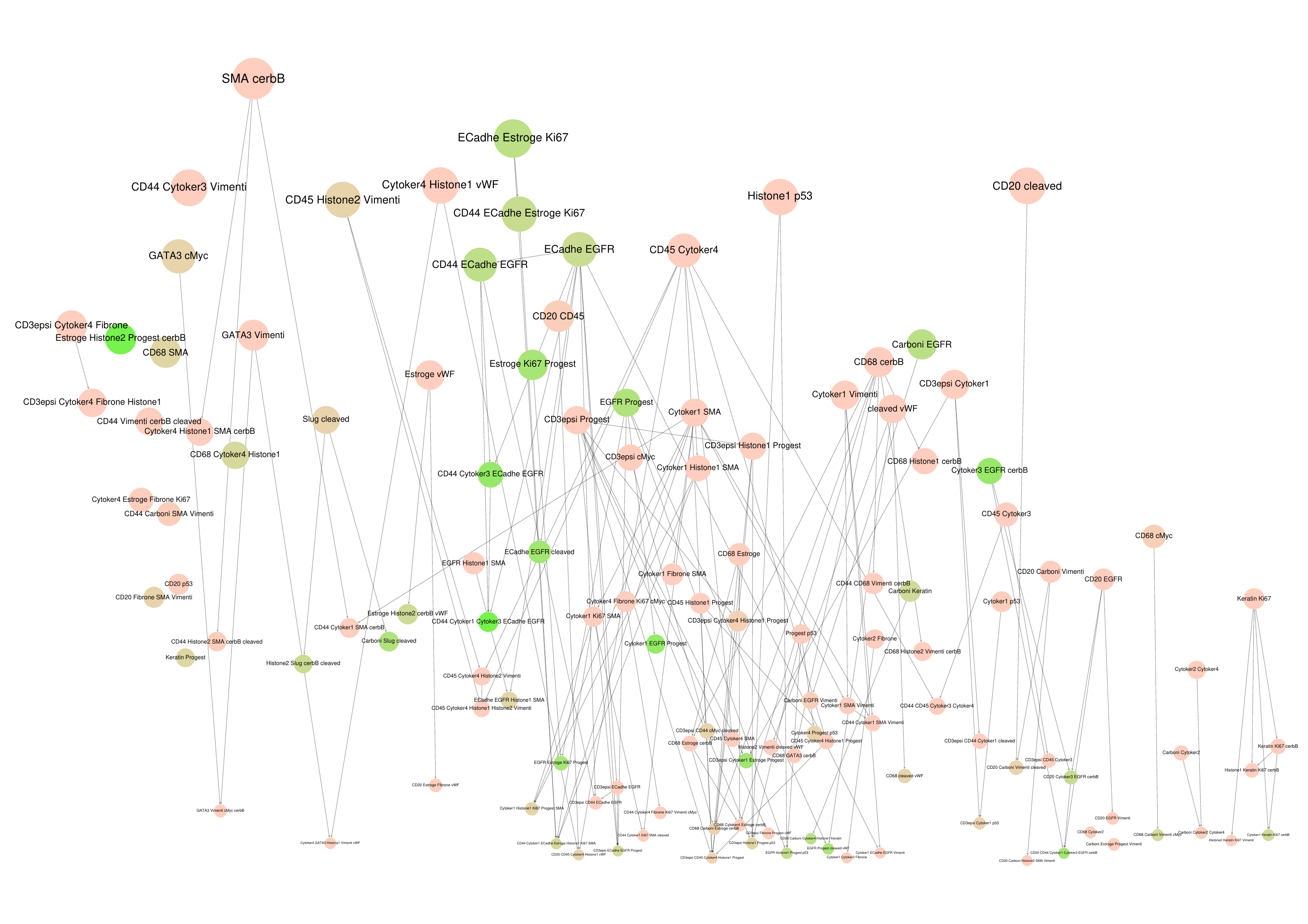}
    \caption{\label{fca_lattice}A portion of the lattice of feature subsets in cell data extracted from 29-channel multiplexed mass cytometry of breast tumor Tissue Micro Arrays (TMA)\autocite{jackson2020single}. The original data are available on Zenodo\autocite{hartland_w_jackson_2019_3518284}. For signature discovery, a random subsample was taken from the cell table. Green indicates lower $p$-value, and pink indicates higher $p$-value. The node size and vertical placement are proportional to the frequency of the sample set displaying the given signature. Only signatures with frequency between 60 and 400 (out of 2280) are shown.}
\end{figure}

\clearpage

\printbibliography

\end{document}